\documentclass[11pt]{article}
\usepackage{fullpage}
\usepackage{amsfonts, amsthm, amssymb, amsmath}
\usepackage{helvet}
\usepackage{framed}
\usepackage{verbatim}
\usepackage{epsfig}
\usepackage[pdftex, pagebackref=true, colorlinks=true, urlcolor=blue, citecolor=blue, linkcolor=blue]{hyperref}

\newtheorem{thm}{Theorem}
\newtheorem{lem}[thm]{Lemma}
\newtheorem{cor}[thm]{Corollary}
\newtheorem{defn}[thm]{Definition}
\newtheorem{clm}[thm]{Claim}
\newtheorem{cons}[thm]{Construction}
\newtheorem{prop}[thm]{Proposition}

\newtheorem{conj}[thm]{Conjecture}
\newtheorem{obs}[thm]{Observation}

\newenvironment{theorem}{\begin{thm}\begin{rm}}%
{\end{rm}\end{thm}}
{\end{rm}\end{lem}}
{\end{rm}\end{cor}}
\newenvironment{definition}{\begin{defn}\begin{em}}%
{\end{em}\end{defn}}
{\end{rm}\end{clm}}
{\end{em}\end{cons}}
{\end{em}\end{prop}}
\newenvironment{conjecture}{\begin{conj}\begin{rm}}%
{\end{rm}\end{conj}}
{\end{rm}\end{obs}}

\newcommand{\secref}[1]{\hyperref[#1]{Section \ref{#1}}}
\newcommand{\thref}[1]{\hyperref[#1]{Theorem \ref{#1}}}
\newcommand{\defref}[1]{\hyperref[#1]{Definition \ref{#1}}}
\newcommand{\cororef}[1]{\hyperref[#1]{Corollary \ref{#1}}}
\newcommand{\propref}[1]{\hyperref[#1]{Proposition \ref{#1}}}
\newcommand{\remref}[1]{\hyperref[#1]{Remark \ref{#1}}}
\newcommand{\lemref}[1]{\hyperref[#1]{Lemma \ref{#1}}}
\newcommand{\clref}[1]{\hyperref[#1]{Claim \ref{#1}}}
\newcommand{\consref}[1]{\hyperref[#1]{Construction \ref{#1}}}
\newcommand{\figref}[1]{\hyperref[#1]{Figure \ref{#1}}}
\newcommand{\eqnref}[1]{\hyperref[#1]{Equation \ref{#1}}}
\newcommand{\subroutineref}[1]{\hyperref[#1]{Subroutine \ref{#1}}}
\newcommand{\apref}[1]{\hyperref[#1]{Appendix \ref{#1}}}
\newcommand{\conjref}[1]{\hyperref[#1]{Conjecture  \ref{#1}}}
\newcommand{\obsref}[1]{\hyperref[#1]{Observation \ref{#1}}}

\newcommand{\ds}{\displaystyle}



\newcommand{\tw}{treewidth}
\newcommand{\pw}{pathwidth}
\newcommand{\dtw}{directed treewidth}
\newcommand{\kw}{Kelly-width}
\newcommand{\dgw}{DAG-width}
\newcommand{\dpw}{directed pathwidth}

\newcommand{\dfstree}{depth-first search tree}


\title{Directed Width Parameters and Circumference of Digraphs}
\vspace{0.15in}
\author{
{Shiva Kintali} \\
\vspace{0.10in} \\
Department of Computer Science, \\
Princeton University, \\
Princeton, NJ 08540-5233. \\
{\small
\href{mailto:kintali@cs.princeton.edu}{\nolinkurl{kintali@cs.princeton.edu}}
}
}

\begin{document}
\maketitle
\begin{abstract}
We prove that the \dtw, \dgw\ and \kw\ of a digraph are bounded above by its circumference plus one. \\

\noindent {\bf{Keywords}}: arboreal decomposition, \dtw, DAG-decomposition, \dgw, Kelly decomposition, \kw.
\end{abstract}

\section{Introduction}\label{sec:intro}

The {\em circumference} of an undirected graph (resp. digraph) $G$, denoted by ${\sf circ}(G)$, is the length of a longest simple undirected (resp. directed) cycle in $G$. The circumference of a DAG is defined to be one. The circumference of an undirected tree is defined to be two. Birmele \cite{birmele-circumference} proved that the \tw\ of an undirected graph $G$, denoted by ${\sf tw}(G)$, is at most its circumference minus one.

\begin{theorem}
(Birmele \cite{birmele-circumference}) For an undirected graph $G$, ${\sf tw}(G) \leq {\sf circ}(G) -1$.
\end{theorem}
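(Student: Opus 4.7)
The plan is a direct \dfstree-based construction of a tree decomposition of $G$. I would first reduce to the case when $G$ is connected, fix an arbitrary root $r$, and take a DFS tree $T$ of $G$ rooted at $r$. The underlying tree of the decomposition is $T$ itself. The crucial property of undirected DFS trees is that every non-tree edge is a back-edge, joining a descendant to a strict ancestor; this is what makes the ancestor-only bookkeeping below suffice.

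For each vertex $v$, I would define the bag
\[
B_v = \{v,\mathrm{parent}(v)\} \cup \bigl\{ u : u \text{ is a strict ancestor of } \mathrm{parent}(v), \text{ and some descendant of } v \text{ has a back-edge to } u \text{ or higher} \bigr\},
\]
with the convention $B_r = \{r\}$. Verifying the three tree-decomposition axioms is routine: each tree edge $\{\mathrm{parent}(v),v\}$ is covered by $B_v$; each back-edge $\{w,a\}$ is covered by $B_x$ for every $x$ on the tree path from $w$ up to the child of $a$; and the set $\{v : u \in B_v\}$ is a subtree of $T$ because the clause "some descendant of $v$ has a back-edge to $u$ or higher" is closed under taking common tree-ancestors of two such $v$'s.

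The heart of the proof is the bound $|B_v| \le {\sf circ}(G)$, which implies ${\sf tw}(G) \le \max_v |B_v| - 1 \le {\sf circ}(G) - 1$. Enumerate $B_v\setminus\{v\} = \{a_1, \ldots, a_k\}$ so that $a_{i+1}$ is a strict $T$-ancestor of $a_i$; by construction $a_1 = \mathrm{parent}(v)$ and all $a_i$ lie on the $r$-to-$v$ tree path. If $k \ge 2$, the definition applied to the topmost $a_k$ gives a descendant $w$ of $v$ with a back-edge to some ancestor $a'$ of $a_k$. Concatenating this back-edge with the unique tree path from $a'$ to $w$ (which passes downward through $a_k, a_{k-1}, \ldots, a_1, v$) yields a cycle through the $k+1$ distinct vertices $a_k, \ldots, a_1, v$, so ${\sf circ}(G) \ge k+1 = |B_v|$. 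The cases $k\le 1$ are handled directly, using the convention ${\sf circ}(\text{tree}) = 2$ when the subtree below $v$ is back-edge-free.

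The main obstacle is choosing the bag definition to simultaneously satisfy the subtree axiom (which pressures bags to be larger, since connectivity forces us to include every intermediate ancestor up to the topmost back-edge target) and the size bound (which pressures them to be smaller). The "all strict ancestors of $\mathrm{parent}(v)$ up to the highest back-edge target, together with $\mathrm{parent}(v)$ and $v$" shape above is exactly what balances these pressures; I expect the write-up to spend most of its care there, with the cycle-counting estimate then dropping out cleanly from the DFS structure.
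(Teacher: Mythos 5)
Your proof is correct, and it is essentially the approach this paper builds on: the theorem is only cited here (the paper gives no proof of it), but your bags --- $v$ together with the interval of ancestors from $\mathrm{parent}(v)$ up to the topmost back-edge target $b_0$ of the subtree below $v$ --- are exactly the ``Birmele idea'' the paper says it generalizes, mirroring its arc bags $A_e = \{r\} \cup \{x \mid b_0 \preceq x \preceq u\}$ in the directed-treewidth construction. The verification of the three axioms and the cycle obtained by closing the tree path $b_0, \ldots, \mathrm{parent}(v), v, \ldots, w$ with the back edge all check out.
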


Motivated by the success of \tw\ in algorithmic and structural graph theory, efforts have been made to generalize \tw\ to digraphs. Johnson et al. \cite{dtw-definition} introduced the first directed analogue of \tw\ called \dtw. Berwanger et al. \cite{dgw-definition1} and independently Obdrzalek \cite{dgw-definition2} introduced \dgw. Hunter and Kreutzer \cite{kellywidth-definition} introduced \kw. For a digraph $G$, let ${\sf dtw}(G)$, ${\sf dgw}(G)$ and ${\sf kw}(G)$ denote its \dtw, \dgw\ and \kw\ respectively. All these {\em directed} width measures are generalizations of undirected \tw\ i.e., for an {\em{undirected}} graph $G$, let $\overset\leftrightarrow{G}$ be the {\it digraph} obtained by replacing each edge $\{u,v\}$ of $G$ by two directed edges $(u,v)$ and $(v,u)$, then: 

\begin{itemize}
\item ${\sf dtw}(\overset\leftrightarrow{G}) = {\sf tw}(G)$ \cite[Theorem 2.1]{dtw-definition}
\item ${\sf dgw}(\overset\leftrightarrow{G}) = {\sf tw}(G)+1$ \cite[Proposition 5.2]{dgw-definition1}
\item ${\sf kw}(\overset\leftrightarrow{G}) = {\sf tw}(G)+1$ \cite{kellywidth-definition}
\end{itemize}

We prove that the \dtw, \dgw\ and \kw\ of a digraph are bounded above by its circumference plus one. Our proofs generalize Birmele's idea of constructing a tree decomposition using a \dfstree. For the \dtw\ we construct an arboreal decomposition from the \dfstree\ very naturally. The underlying arborescence is the \dfstree\ itself. For the \dgw\ and \kw\ we construct the underlying DAG using the \dfstree\ and some carefully chosen additional edges. Constructing the corresponding ``bags" requires some additional work to satisfy the strict guarding conditions of DAG-decompositions and Kelly-decompositions. Our main theorem is as follows:

\begin{theorem}\label{thm:main-theorem}
For a digraph $G$,
\begin{itemize}
\item ${\sf dtw}(G) \leq {\sf circ}(G) + 1$
\item ${\sf dgw}(G) \leq {\sf circ}(G) + 1$
\item ${\sf kw}(G) \leq {\sf circ}(G) + 1$
\end{itemize}
\end{theorem}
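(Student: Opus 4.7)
The plan is to follow Birmele's template applied to a depth-first search arborescence. Fix a DFS arborescence $T$ of $G$ (extended to a forest across weakly connected components if necessary) rooted at a vertex $r$. The key structural fact I will exploit is that every directed cycle of $G$ uses at least one \emph{back edge} of $T$ -- an edge from a descendant to an ancestor in $T$ -- because deleting back edges leaves a DAG whose topological order is given by reverse DFS finish times. Consequently, if $(x,y)$ is a back edge with $x$ a descendant of $v$ and $y$ an ancestor of $v$, the tree path from $y$ down to $x$ together with the arc $(x,y)$ is a directed cycle, so that tree path contains at most ${\sf circ}(G)$ vertices. For each vertex $v$ let $a(v)$ denote the highest ancestor of $v$ reached by some back edge whose tail lies in the subtree $T_v$ (with $a(v)=v$ if no such edge exists), and let $P(v)$ be the tree path from $a(v)$ down to $v$; then $|V(P(v))|\le{\sf circ}(G)$.

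For the \dtw\ bound the plan is to take $T$ itself as the underlying arborescence of the arboreal decomposition. I set $W_v=\{v\}$ at each node and, on the tree arc $e_v$ entering $v$, set the guard $X_{e_v}=V(P(v))\setminus\{v\}$. A short check -- using that for every child $c$ of $v$ the ancestor $a(c)$ lies on $P(v)$, so $X_{(v,c)}\subseteq V(P(v))$ -- shows the bag $W_v\cup\bigcup_{e\sim v}X_e$ is contained in $V(P(v))$, yielding width at most ${\sf circ}(G)+1$. The nontrivial part is checking the normality condition that $W_{\ge v}=V(T_v)$ is $X_{e_v}$-normal: any walk in $G-X_{e_v}$ with endpoints in $T_v$ and internal vertices outside $T_v\cup X_{e_v}$ must first leave $T_v$ via an arc whose head is either an ancestor of $v$ -- necessarily on $V(P(v))$ by maximality of $a(v)$, hence in $X_{e_v}$, a contradiction -- or a cross/forward target, in which case closing the walk up via tree arcs inside $T_v$ produces a directed cycle of $G$ whose guaranteed back edge again lands in $X_{e_v}$.

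For \dgw\ and \kw\ the decomposition object is a DAG rather than an arborescence, and the guarding conditions are stricter. The plan is to build the decomposition DAG $D$ by augmenting $T$ with one auxiliary arc $(a(v),v)$ for each $v$ with $a(v)\ne v$; since every added arc points from an ancestor to a descendant in $T$, $D$ is acyclic. Assign to each node $v$ the bag $B_v=V(P(v))$, of size at most ${\sf circ}(G)$, yielding width at most ${\sf circ}(G)+1$. For each arc $(d,d')$ of $D$ the intersection $B_d\cap B_{d'}$ is an initial segment of $P(d')$ and serves as the candidate DAG-decomposition guard; the Kelly-decomposition is obtained from the same bags together with the DFS discovery order as the underlying elimination order.

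The main obstacle will be verifying the stronger guarding conditions. DAG-decompositions require that for every arc $(d,d')$ of $D$ the intersection $B_d\cap B_{d'}$ separates $\bigcup_{d''\succeq d'}B_{d''}$ from its complement in $G$, and Kelly-decompositions impose an acyclic robber elimination compatible with the bags and their guards. The delicate step is showing that when $D$ is obtained from $T$ by adding only the arcs $(a(v),v)$, every entry into $V(T_v)\setminus V(P(v))$ from outside is forced to cross $V(P(v))$; this reduces once more to the DFS fact that every directed cycle uses a back edge, combined with the maximality of $a(v)$. Once this single normality-style lemma is established, both the DAG-decomposition and Kelly-decomposition axioms should follow by routine verification of the definitions.
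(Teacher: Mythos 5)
Your overall template (a DFS arborescence with bags equal to the tree path from the highest back-edge target $a(v)$ of the subtree $T_v$ down to $v$) is the same as the paper's, but there are two concrete gaps. First, in the directed treewidth part, the normality check does not go through in the cross-edge case. A walk can leave $T_v$ by a cross arc into an earlier-explored subtree $T_z$, climb by a back arc \emph{of that other subtree} to a common ancestor $w$ of $z$ and $v$ lying strictly above $a(v)$, and re-enter $T_v$ by a forward or tree arc out of $w$; neither $z$ nor $w$ lies on $P(v)$, so the walk avoids your guard $V(P(v))\setminus\{v\}$ entirely. Your proposed repair --- closing the walk into a cycle via tree arcs and invoking the back edge that cycle must contain --- fails on both counts: the walk's two endpoints inside $T_v$ need not be in ancestor--descendant relation, and even when the walk does close into a cycle, the guaranteed back edge can be the arc $(z,w)$, whose tail is outside $T_v$ and whose head is above $a(v)$, hence outside $X_{e_v}$. (The paper at least inserts the root $r$ into every guard and asserts that every escaping-and-returning walk meets $r$ or the guarded tree path; you drop the root, so your guard is strictly weaker than the paper's at exactly the delicate point.)

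Second, and more seriously, your decomposition DAG for the DAG-width and Kelly-width parts is structurally wrong. The arcs $(a(v),v)$ you add run from an ancestor to a descendant of $T$, so they change the reachability relation not at all; consequently $\bigcup_{d''\succeq j}B_{d''}$ is essentially $V(T_j)$ together with a few ancestors of $j$, while condition (DGW-3) demands that \emph{no arc leaves} $X_{\succeq j}\setminus X_i$ except into $X_i\cap X_j$. A single cross arc from $T_j$ into an earlier subtree already violates this, no matter how you choose guards along $P(j)$; note also that you are attacking the wrong direction when you speak of forcing ``every entry into $V(T_v)\setminus V(P(v))$ from outside'' to cross $V(P(v))$, since guarding constrains arcs leaving the set, not entering it. The paper's essential device here, which your plan lacks, is to augment $T$ with an arc from each $v$ to \emph{every} vertex discovered before $v$ that is not an ancestor of $v$. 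This makes $X_{\succeq j}$ absorb everything reachable from $T_j$ through cross arcs, so that the only arcs leaving $X_{\succeq j}\setminus X_i$ are back arcs into ancestors of $j$, which do land in $X_i\cap X_j$. Without some closure device of this kind, the DAG-width and Kelly-width claims cannot be verified from your construction.
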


Birmele's theorem is tight as ${\sf tw}(K_n) = n-1$ and ${\sf circ}(K_n) = n$. Since ${\sf dtw}(\overset\leftrightarrow{K_n}) = n-1$, ${\sf dgw}(\overset\leftrightarrow{K_n}) = n$ and ${\sf kw}(\overset\leftrightarrow{K_n}) = n$, we conjecture that \thref{thm:main-theorem} can be improved with the following tight bounds:

\begin{conjecture}
For a digraph $G$,
\begin{itemize}
\item ${\sf dtw}(G) \leq {\sf circ}(G) - 1$
\item ${\sf dgw}(G) \leq {\sf circ}(G)$
\item ${\sf kw}(G) \leq {\sf circ}(G)$
\end{itemize}
\end{conjecture}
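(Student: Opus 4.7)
The plan is to revisit the DFS-arborescence-based decompositions from the proof of \thref{thm:main-theorem} and sharpen the bag construction so that the slack over Birmele's undirected bound disappears. The extremal examples $\overset\leftrightarrow{K_n}$ realize ${\sf dtw} = {\sf circ}-1$ and ${\sf dgw} = {\sf kw} = {\sf circ}$, so any residual slack in the current proof must be traceable to vertices placed in bags that are not genuinely forced by the directed guarding conditions.

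For the \dtw\ bound, I would keep the DFS arborescence $T$ of $G$ as the underlying tree of the arboreal decomposition and try to build, for each tree arc $e=(p(v),v)$, a bag $B_e$ of size at most ${\sf circ}(G)$. The candidate is $B_e = \{v\}\cup A(v)$, where $A(v)$ is the set of strict ancestors $u$ of $v$ in $T$ for which some directed cycle of $G$ uses the tree path from $u$ to $v$. Each such $u$ is witnessed by a directed cycle of length at most ${\sf circ}(G)$ through $v$, and any two such cycles are ``nested'' along the root-to-$v$ path, so $|A(v)|\leq {\sf circ}(G)-1$. The first task is to verify the normality and the $Z$-guarding condition of arboreal decompositions for this slimmer family of bags; the second is to argue that non-tree arcs which are neither back arcs nor lie on a directed cycle through $v$ cannot damage those conditions, which is where the directed setting genuinely diverges from the undirected one.

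For \dgw\ and \kw\ the gap to the conjecture is only one vertex per bag, so the strategy is more surgical: identify a single redundant vertex (typically $p(v)$) in each bag produced in the proof of \thref{thm:main-theorem} and show that its role can be absorbed into the auxiliary arcs added to the underlying DAG. A plausible route is to replace each auxiliary arc of the form $(p(v),w)$ by $(v,w)$ whenever the guarding at $v$ alone already suffices, thereby shifting responsibility from $p(v)$ to $v$ and dropping $p(v)$ from the bag. The DAG-decomposition guarding condition would then need to be re-verified; for \kw\ the additional ``guarded shift to an ancestor'' clause provides extra slack that I would exploit to avoid reintroducing $p(v)$.

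The main obstacle is that the directed guarding conditions are strictly stronger than the ``every edge lies in a bag'' property used by Birmele: a robber can escape one subtree via a forward or cross arc and re-enter through a distant ancestor without any single back arc witnessing the detour. This is precisely why the tightening does not come for free, and why $\overset\leftrightarrow{K_n}$ sits on the boundary. I expect closing the two-vertex gap for \dtw\ to be the hardest step, and it may require replacing the plain DFS arborescence by a subtler one --- for instance obtained by contracting ears of a longest cycle, so that the ancestors counted by $A(v)$ are forced to be pairwise co-cyclic with $v$ --- rather than merely pruning the bags produced by the existing construction.
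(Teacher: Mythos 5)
This statement is the paper's Conjecture~3; the paper offers no proof of it and explicitly leaves it open, proving only the weaker bounds ${\sf circ}(G)+1$ in \thref{thm:main-theorem}. Your submission is a research plan rather than a proof, and you say as much: the two steps that would actually constitute the argument --- verifying $A_e$-normality (what you call the ``$Z$-guarding condition,'' which is not a condition appearing in \defref{definition:arboreal-decomposition}) for the slimmed bags $B_e=\{v\}\cup A(v)$, and showing that a single vertex can be dropped from each DAG-decomposition and Kelly-decomposition bag --- are both deferred (``the first task is to verify\dots'', ``a plausible route is\dots'', ``I expect closing the two-vertex gap \dots to be the hardest step''). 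Nothing in the proposal discharges them.

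The gap is not merely one of presentation. The paper's constructions keep the root $r$ in every arc bag and node bag precisely because, in a DFS tree of a digraph, a directed path can leave $T_{\succeq v}$ and re-enter it via cross edges without ever touching the ancestor segment $\{x \mid b_0 \preceq x \preceq v\}$; the root is the only vertex the paper can point to that such a detour is forced to visit. Your slimmer bag $\{v\}\cup A(v)$ discards exactly this safety net, and you give no argument that the escaping path must meet $A(v)$ --- indeed your final paragraph identifies this as ``precisely why the tightening does not come for free'' without resolving it. Similarly, rerouting the auxiliary arcs $(p(v),w)$ to $(v,w)$ in $\tilde T$ must be checked against both the acyclicity of $\tilde T$ and condition {\sf (DGW-2)}, and neither check appears. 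Until these steps are carried out, the conjecture remains open, and your text should be read as a (reasonable) strategy sketch, not a proof.
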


Birmele's theorem does not hold for \pw\ since complete binary trees have unbounded \pw. Nesetril and Ossona de Mendez \cite{sparsity-book} showed that the \pw\ of a 2-connected graph $G$ is at most $({\sf circ}(G)-2)^2$. Marshall and Wood \cite{pathwidth-circumference-improved} improved this bound to ${\lfloor{\sf circ}(G)/2\rfloor}$ $({\sf circ}(G)-1)$. Generalizing these results to \dpw, under a suitable {\em directed connectivity} assumption is an interesting open problem.

\subsection{Notation}\label{sec:notation}

We use standard graph theory notation and terminology (see \cite{diestel-textbook}). All digraphs are finite and simple (i.e. no self loops and no multiple arcs). For a digraph $G$, we write $V(G)$ for its vertex set and $E(G)$ for its arc set. For $S \subseteq V(G)$ we write $G[S]$ for the subdigraph induced by $S$, and $G \setminus S$ for the subdigraph induced by $V(G) - S$.

We use the term DAG when referring to directed acyclic graphs. A node is a {\it root} if it has no incoming arcs. The DAG $T$ is an {\it arborescence} if it has a unique root $r$ such that for every node $i \in V(T)$ there is a unique directed walk from $r$ to $i$. Note that every arborescence arises from an undirected tree by selecting a root and directing all edges away from the root.

Let $T$ be a DAG. For two {\em distinct} nodes $i$ and $j$ of $T$, we write $i \prec_T j$ if there is a directed walk in $T$ with first node $i$ and last node $j$. For convenience, we write $i \prec j$ whenever $T$ is clear from the context. For nodes $i$ and $j$ of $T$, we write $i \preceq j$ if either $i=j$ or $i \prec j$. For an arc $e=(i,j)$ and a node $k$ of $T$, we write $e \prec k$ if either $j=k$ or $j \prec k$. We write $e \sim i$ (and $e \sim j$) to mean that $e$ is incident with $i$ (and $j$ respectively). We define $T_{\succeq v} = T[\{ x\ |\ {x \succeq v} \}]$.

Let $\mathcal{W}=(W_i)_{i \in V(T)}$ be a family of finite sets called {\it node bags}, which associates each node $i$ of $T$ to a node bag $W_i$. We write $W_{\succeq i}$ to denote $\ds\bigcup_{j \succeq i} W_j$. For an arc $e$ of $T$, we write $W_{\succ e}$ to denote $\ds\bigcup_{j \succ e} W_j$. Let $\mathcal{A}=(A_e)_{e \in E(T)}$ be a family of finite sets called {\it arc bags}, which associates each arc $e$ of $T$ to an arc bag $A_e$. We write $A_{\sim i}$ to denote $\ds\bigcup_{e \sim i}A_e$.

\subsection{Guarding, $X$-normal and Directed unions}

Width measures like \dgw\ and \kw\ are based on the following notion of {\it guarding}:
\begin{definition}[Guarding]\label{definition:guarding}
Let $G$ be a digraph and $W, X \subseteq V(G)$. We say $X$ {\it guards} $W$ if $W \cap X = \emptyset$, and for all $(u,v) \in E(G)$, if $u \in W$ then $v \in W \cup X$.
\end{definition}

In other words, $X$ guards $W$ means that there is no directed path in $G \setminus X$ that starts from $W$ and leaves $W$. The notion of \dtw\ is based on a weaker condition:
\begin{definition}[$X$-normal]\label{definition:normal}
Let $G$ be a digraph and $W, X \subseteq V(G)$. We say $W$ is {\it $X$-normal} if $W \cap X = \emptyset$, and there is no directed path in $G \setminus X$ with first and last vertices in $W$ that uses a vertex of $G \setminus (W \cup X)$.
\end{definition}

In other words, $W$ is $X$-normal means that there is no directed path in $G \setminus X$ that starts from $W$, leaves $W$ and then returns to $W$. A digraph $D$ is a {\em directed union} of digraphs $D_1$ and $D_2$ if $D_1$ and $D_2$ are induced subgraphs of $D$, $V(D_1) \cup V(D_2) = V(D)$, and no edge of $D$ has head in $V(D_1)$ and tail in $V(D_2)$. The \dtw, \dgw\ and \kw\ are closed under directed unions (see \cite{dtw-definition, dgw-journal, recog-kw-two}). The following theorem is immediate.

\begin{theorem}
(\cite{dtw-definition, dgw-journal, recog-kw-two}) The \dtw\ (resp. \dgw, \kw) of a digraph $G$ is equal to the maximum \dtw\ (resp. \dgw, \kw) taken over the strongly-connected components of $G$.
\end{theorem}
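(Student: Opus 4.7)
The plan is to reduce to the closure-under-directed-unions property by decomposing $G$ along its condensation, i.e., the DAG whose vertices are the strongly connected components (SCCs) of $G$ and whose arcs record the existence of at least one arc between two different SCCs. Let $C_1, C_2, \ldots, C_k$ be a linear extension of this condensation so that whenever $G$ contains an arc from $C_i$ to $C_j$ with $i \neq j$, we have $i < j$.

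Define the induced subgraphs $G_s = G[V(C_1) \cup \cdots \cup V(C_s)]$ for $s = 1, \ldots, k$, so that $G_1 = C_1$ and $G_k = G$. The central step is to verify that for each $s \ge 2$, the digraph $G_s$ is a directed union of $G_{s-1}$ and $C_s$ in the sense of the definition recalled above. Both are induced subgraphs of $G_s$, their vertex sets cover $V(G_s)$, and any arc of $G_s$ not internal to $G_{s-1}$ or to $C_s$ must be an inter-SCC arc between $C_s$ and some $C_j$ with $j < s$; by the choice of the linear extension such an arc has its tail in $V(C_j) \subseteq V(G_{s-1})$ and its head in $V(C_s)$, so no arc of $G_s$ has head in $V(G_{s-1})$ and tail in $V(C_s)$. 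Hence the directed-union condition holds.

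Closure of \dtw, \dgw\ and \kw\ under directed unions, as cited from \cite{dtw-definition, dgw-journal, recog-kw-two}, then gives ${\sf dtw}(G_s) = \max({\sf dtw}(G_{s-1}), {\sf dtw}(C_s))$, and similarly for \dgw\ and \kw. Iterating from $s=2$ to $s=k$ yields ${\sf dtw}(G) = \max_{1 \le i \le k} {\sf dtw}(C_i)$, and likewise for the other two measures, which is the upper bound direction. For the matching lower bound one only needs that each SCC $C_i$ is an induced subgraph of $G$, together with the elementary fact (immediate from the respective definitions of arboreal, DAG- and Kelly-decompositions, since restricting the bags of a decomposition of $G$ to $V(C_i)$ yields a valid decomposition of $C_i$ of no larger width) that these width measures are monotone under induced subgraphs.

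The only real content is the verification of the directed-union condition in the inductive step, and so the potential pitfall is a sign error in the orientation convention: one must check that the forbidden direction (head in $V(D_1)$, tail in $V(D_2)$) is precisely the one ruled out by the topological order on the condensation, and not the opposite. Everything else is a straightforward induction plus the cited closure and monotonicity properties.
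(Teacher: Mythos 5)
Your proposal is correct and follows exactly the route the paper intends: the paper simply cites closure under directed unions and declares the theorem ``immediate,'' and your argument (topologically ordering the condensation, verifying the directed-union condition for each prefix, and invoking closure plus monotonicity under induced subgraphs) is the standard fleshing-out of that one-line justification, including the correct orientation check. Nothing is missing and nothing differs in substance from the paper's (implicit) proof.
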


Also, the circumference of a digraph $G$ is equal to the maximum circumference taken over the strongly-connected components of $G$. Hence, we may assume that all digraphs are strongly-connected in the rest of this paper.

\subsection{Depth-first search tree}\label{sec:dfs-tree}

Let $G$ be a strongly-connected digraph. Let $T$ be a \dfstree\ of $G$ starting at an arbitrary root $r \in V(G)$. The tree $T$ is an arborescence rooted at $r$. The edges of $G$ are classified into one of the four types : {\em tree edges}, {\em forward edges}, {\em back edges} and {\em cross edges} (see \cite{CLRS-algorithms-textbook}). For a vertex $v \in V(G)$, let $dfs(v)$ be the ``dfs number" of $v$ i.e., the time-stamp assigned to $v$ when $v$ is visited for the first time during the construction of $T$.

\section{Directed \tw\ and Circumference}

\begin{definition}[Arboreal decomposition and \dtw\cite{dtw-definition}]\label{definition:arboreal-decomposition}
An {\it arboreal decomposition} of a digraph $G$ is a triple $\mathcal{D} = (T, \mathcal{W}, \mathcal{A})$, where $T$ is an arborescence, and $\mathcal{W}=(W_i)_{i\in V(T)}$ is a family of subsets (node bags) of $V(G)$, and $\mathcal{A}=(A_e)_{e\in E(T)}$ is a family of subsets (arc bags) of $V(G)$, such that:
\begin{itemize}
\item { $\mathcal{W}$ is a partition of $V(G)$. \hfill{\rm{\sf (DTW-1)}}  }
\item { For each arc $e \in E(T)$, $W_{\succ e}$ is $A_e$-normal. \hfill{\rm{\sf (DTW-2)}} }
\end{itemize}
The width of an arboreal decomposition $\mathcal{D}=(T,\mathcal{W},\mathcal{A})$ is defined as $\max\{|W_i \cup A_{\sim i}|:i \in V(T)\} - 1$. The {\it \dtw} of $G$, denoted by ${\sf dtw}(G)$, is the minimum width over all possible arboreal decompositions of $G$.
\end{definition}

\begin{theorem}
For a digraph $G$, ${\sf dtw}(G) \leq {\sf circ}(G) + 1$.
\end{theorem}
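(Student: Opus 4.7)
The plan is to follow the approach sketched in the introduction and build the arboreal decomposition directly from the depth-first search tree. Since ${\sf dtw}$ is closed under directed unions, I may assume $G$ is strongly connected. Fix a DFS arborescence $T$ of $G$ rooted at any vertex $r$, and take singleton node bags $W_i = \{i\}$ for each $i \in V(T)$. Then $\mathcal{W}$ partitions $V(G)$, so (DTW-1) is immediate, and the substance of the proof lies in choosing the arc bags $A_e$ so that (DTW-2) holds and the width is bounded.

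For a tree arc $e = (i,j)$, we have $W_{\succ e} = W_{\succeq j}$, the vertex set of the subtree of $T$ rooted at $j$. I would define $A_e$ to consist of the parent $i$ together with every strict ancestor $u$ of $j$ in $T$ that is incident with a non-tree arc of $G$ linking it to $W_{\succeq j}$ (the target of a back arc from $W_{\succeq j}$, or the source of a forward arc into $W_{\succeq j}$). The circumference controls the size: if $v \to u$ is such a back arc, then the tree path from $u$ down to $v$ closed off by $v \to u$ is a simple cycle of length $\mathrm{depth}(v) - \mathrm{depth}(u) + 1 \leq {\sf circ}(G)$, forcing $u$ to sit within tree-distance ${\sf circ}(G) - 1$ of $j$; a similar cycle argument using a short return path handles forward arcs. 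Therefore every ancestor appearing in any $A_e$ incident to a node $i$ lies in a window of at most ${\sf circ}(G)$ ancestors above $i$, giving $|W_i \cup A_{\sim i}| \leq {\sf circ}(G) + 2$ and hence width at most ${\sf circ}(G) + 1$.

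The main obstacle is (DTW-2): showing $W_{\succeq j}$ is $A_e$-normal, i.e.\ no directed path in $G \setminus A_e$ both starts and ends in $W_{\succeq j}$ while passing through a vertex of $V(G) \setminus (W_{\succeq j} \cup A_e)$. I would classify each arc of $G$ with respect to $T$ as tree, forward, back, or cross, and follow a hypothetical offending path. Tree and forward arcs out of $W_{\succeq j}$ stay inside it; back arcs out go to strict ancestors of $j$ and are blocked by $A_e$; the unique tree arc entering $W_{\succeq j}$ from outside is $(i,j)$, blocked by $i \in A_e$; forward arcs into $W_{\succeq j}$ are blocked by construction. The delicate case is cross arcs, and here I would invoke the DFS lemma that no arc of $G$ can go from a subtree of $T$ explored entirely before $W_{\succeq j}$ to a subtree explored entirely after (otherwise the later head would be picked up as a tree-arc child of the earlier active tail, a contradiction). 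This forces any cross-arc detour to funnel back through an ancestor of $j$ in $T$; verifying that every such funnel ancestor is already captured in $A_e$ by the back/forward conditions above is where the circumference hypothesis does the real work, and this is the step I expect to be the most delicate.
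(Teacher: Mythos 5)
Your construction has the same skeleton as the paper's (DFS arborescence, singleton node bags $W_i=\{i\}$, arc bags made of ancestors of $j$ hit by non-tree arcs), but your arc bags differ in substance: the paper always inserts the root $r$ and takes the \emph{entire contiguous interval} of ancestors from the topmost back-arc target $b_0$ down to the parent $u$, whereas you take only the individual back-arc targets, add forward-arc sources, and drop the root. Both deviations expose genuine gaps. First, the size bound for forward-arc sources does not follow from the circumference. A forward arc $(a,y)$ into $W_{\succeq j}$ and the tree path from $a$ down to $y$ are oriented the same way, so they do not close into a cycle; the only cycle you get for free is $(a,y)$ plus a shortest path from $y$ back to $a$, which bounds $\mathrm{dist}_G(y,a)$ but not the tree-depth of $a$ above $j$. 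Concretely, the digraph on $a,q_1,q_2,q_3,y$ with arcs $(a,q_1),(q_1,q_2),(q_2,q_3),(q_3,y),(a,y),(y,q_2),(q_2,a)$ is strongly connected with circumference $3$, yet a DFS from $a$ produces the path $a\to q_1\to q_2\to q_3\to y$ with $(a,y)$ a forward arc spanning depth $4$. So your claim that every vertex placed in $A_e$ lies ``within a window of at most ${\sf circ}(G)$ ancestors'' is false for forward-arc sources, and the width bound is unsupported.

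Second, the cross-arc case you explicitly leave open is where the construction actually breaks. Your DFS lemma correctly forces any escaping-and-returning walk to re-cross from the ``earlier'' region into the ``later'' region through a proper ancestor $x$ of $j$; but that crossing arc need only go to \emph{some} descendant of $x$ discovered at or after $j$, not into $W_{\succeq j}$ itself. If it lands in a sibling subtree explored after $W_{\succeq j}$, from which a cross arc re-enters $W_{\succeq j}$, then $x$ is neither a back-arc target nor a forward-arc source for $W_{\succeq j}$ and is absent from your $A_e$. For example, take tree arcs $r\to x\to c$, $x\to i\to j\to j'$, $x\to y$ with DFS visiting $c$ first, then $i,j,j'$, then $y$, and non-tree arcs $(j',c)$, $(c,x)$, $(y,j)$, $(j',i)$, $(c,r)$; the graph is strongly connected, and for $e=(i,j)$ your bag is $A_e=\{i\}$, yet the path $j'\to c\to x\to y\to j$ starts and ends in $W_{\succeq j}=\{j,j'\}$ and avoids $A_e$, so {\sf (DTW-2)} fails. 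The paper attempts to catch such funnel vertices by loading $A_e$ with the root and the whole interval $\{z: b_0\preceq z\preceq u\}$ rather than with incidence-defined vertices; whatever one thinks of how fully that is justified there, your bags are strictly smaller and demonstrably too small, so the step you flag as ``the most delicate'' is not a detail to be checked but the missing core of the proof.
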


\begin{proof}
Let $T$ be the \dfstree\ constructed in \secref{sec:dfs-tree}. Let $\mathcal{W}=(W_i)_{i\in V(T)}$ be a partition of $V(G)$ defined as $W_i = \{i\}$ for each $i \in V(T)$. For every edge $e = (r,v) \in E(T)$, we define $A_e = \{r\}$. For every edge $e = (u,v) \in E(T)$ such that $u \neq r$ we define $A_e$ as follows:

\begin{itemize}
\item{if there are no back edges from $W_{\succ e}$, we define $A_e = \{r\}$.}
\item{if there are back edges from $W_{\succ e}$, let $B$ be the set of all vertices $b \preceq u$ such that there is a back edge from some vertex in $W_{\succ e}$ to $b$. Let $b_0$ be the minimal element in $B$ with respect to $\preceq$. Let $A_e = \{r\} \cup \{x\ |\ b_0 \preceq x \preceq u\}$. Note that $|\{x\ |\ b_0 \preceq x \preceq u\}| \leq l-1$ and hence $|A_e| \leq l$.}
\end{itemize}

Let $\mathcal{A}=(A_e)_{e\in E(T)}$. We claim that $\mathcal{D}=(T,\mathcal{W},\mathcal{A})$ is an arboreal decomposition of $G$ of width at most $l+1$. By construction, $\mathcal{W}=(W_i)_{i\in V(T)}$ is a partition of $V(G)$ so $\mathcal{D}$ satisfies {\sf (DTW-1)}. To show that $\mathcal{D}$ satisfies {\sf (DTW-2)} we must show that for each arc $e \in E(T)$, $W_{\succ e}$ is $A_e$-normal. For every edge $e = (r,v) \in E(T)$, every directed path that leaves $W_{\succ e}$ and returns to $W_{\succ e}$ must go through the root $r$. Hence, $W_{\succ e}$ is $A_e$-normal. For every edge $e = (u,v) \in E(T)$ such that $u \neq r$ we consider the following cases:

\begin{itemize}
\item{if there are no back edges from $W_{\succ e}$, every directed path that leaves $W_{\succ e}$ and returns to $W_{\succ e}$ must go through the root $r$. Hence, $W_{\succ e}$ is $A_e$-normal.}
\item{if there are back edges from $W_{\succ e}$, every directed path that leaves $W_{\succ e}$ and returns to $W_{\succ e}$ must go through the root $r$ (or) go through a vertex in $\{x\ |\ b_0 \preceq x \preceq u\}$. Hence, $W_{\succ e}$ is $A_e$-normal.}
\end{itemize}

The size of each arc bag is at most $l$. Let $e_1 = (u,v), e_2 = (v,w) \in E(T)$. Let $B_1$ be the set of all vertices $b \preceq u$ such that there is a back edge from some vertex in $W_{\succ e_1}$ to $b$. Let $B_2$ be the set of all vertices $b' \preceq v$ such that there is a back edge from some vertex in $W_{\succ e_2}$ to $b'$. Note that $B_2 \subseteq B_1 \cup \{v\}$. Hence, for every $i \in V(T)$ the number of vertices in $A_{\sim i}$ is at most $l+1$ and the number of vertices in $W_i \cup A_{\sim i}$ is at most $l+2$. Hence, the width of $\mathcal{D}=(T,\mathcal{W},\mathcal{A})$ is at most $l+1$.
\end{proof}

\section{\dgw\ and Circumference}

\begin{definition}[DAG-decomposition and \dgw\ \cite{dgw-definition1,dgw-definition2,dgw-journal}]\label{definition:DAG-decomposition}
A {\it DAG decomposition} of a digraph $G$ is a pair $\mathcal{D} = (T,\mathcal{X})$ where $T$ is a DAG, and $\mathcal{X} = (X_i)_{i \in V(T)}$ is a family of subsets (node bags) of $V(G)$, such that:
\begin{itemize}
\item $\bigcup_{i \in V(T)} X_i = V(G)$. \hfill{\rm{\sf (DGW-1)}}
\item For all nodes $i,j,k \in V(T)$, if $i \preceq j \preceq k$, then $X_i \cap X_k \subseteq X_j$. \hfill{\rm{\sf (DGW-2)}}
\item For all arcs $(i,j) \in E(T)$, $X_i \cap X_j$ guards $X_{\succeq j} \setminus X_i$. For any root $r \in V(T)$,\\
$X_{\succeq r}$ is guarded by $\emptyset$. \hfill{\rm{\sf (DGW-3)}}
\end{itemize}
The width of a DAG-decomposition $\mathcal{D}=(T,\mathcal{X})$ is defined as $\max\{|X_i|:i \in V(T)\}$. The {\it \dgw} of $G$, denoted by ${\sf dgw}(G)$, is the minimum width over all possible DAG-decompositions of $G$.
\end{definition}

\begin{theorem}\label{thm:dgw-circumference}
For a digraph $G$, ${\sf dgw}(G) \leq {\sf circ}(G) + 1$.
\end{theorem}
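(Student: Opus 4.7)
The plan is to build a DAG-decomposition $(T', \mathcal{X})$ of $G$ of width at most $l+1$, where $l = {\sf circ}(G)$, by augmenting the DFS tree $T$ (from \secref{sec:dfs-tree}) rooted at some $r \in V(G)$ with auxiliary arcs chosen from the cross edges of $G$, and then designing node bags that respect the strict guarding condition (DGW-3).

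The DAG $T'$ has vertex set $V(G)$, and its arcs are all tree arcs of $T$ together with, for each cross edge $(u, w)$ of $G$, the auxiliary arcs $(v, w)$ for every $v$ on the $T$-path from $v^*$ to $u$, where $v^*$ is the child of $z = \mathrm{LCA}_T(u, w)$ on the $T$-path to $u$. Because tree arcs increase DFS preorder while cross edges (and hence the auxiliary arcs) decrease it between disjoint subtrees, $T'$ is acyclic. For each $v \in V(G)$ define
\[ X_v \;=\; \{r\} \cup P(v), \]
where $P(v)$ is the $T$-path from $b(v)$ down to $v$ and $b(v)$ is the $\preceq_T$-highest vertex $b \preceq_T v$ such that some edge of $G$ whose tail lies in $T'_{\succeq v}$ has head $b$; if no such ancestor exists, set $b(v) = v$.

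Verification has four steps. \emph{(i) Bag size.} For $v$ with $b(v) \ne v$, concatenating the tree path $[b(v), v]$, a $T'$-path from $v$ to the witnessing tail $u$, and the edge $u \to b(v)$ forms a closed walk in $G$; the tree path lies strictly above $v$ in $T$ while the $T'$-path stays in $T_v$ or drops into earlier sibling subtrees, so the two vertex sets are disjoint and the walk is a simple cycle. Its length is at least $|P(v)|$ and at most $l$, giving $|X_v| \le l+1$. \emph{(ii)} (DGW-1) is immediate since $v \in X_v$. \emph{(iii) (DGW-3).} At a tree arc $(i,j) \in E(T')$, back edges from $T_j$ to ancestors of $j$ land in $[b(j), i] \subseteq X_i \cap X_j$, while cross edges from $T_j$ have their target in $X_{\succeq j}$ because that target was inserted as a $T'$-successor by the auxiliary arcs; auxiliary arcs of $T'$ are handled in the same way. \emph{(iv) (DGW-2).} Given $i \preceq_{T'} j \preceq_{T'} k$ and $x \in X_i \cap X_k \setminus \{r\}$, the vertex $x$ is a common $T$-ancestor of $i$ and $k$. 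Because an auxiliary arc can exit $T_x$ only downward into an earlier sibling subtree — from which there is no way back to $T_x$ along any further arcs of $T'$ — every $T'$-path from $i$ to $k$ stays inside $T_x$, so $j \in T_x$; the definition of $b(j)$ via $T'$-descendants then yields $b(j) \preceq_T x$, so $x \in X_j$.

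The main obstacle is step (iv). The DGW guarding condition is strictly stronger than the $X$-normal condition used for the DTW proof, and since $\preceq_{T'}$ mixes $T$-descendance with cross-edge-induced arcs, the bag at an intermediate vertex $j$ must absorb common $T$-ancestors of arbitrary $T'$-endpoints, not only of $T$-endpoints. Taking $b(v)$ over $T'$-descendants rather than over $T$-descendants is what makes (iv) go through, and verifying that this broader definition still produces bags of size at most $l+1$ is exactly what the cycle analysis of step (i) accomplishes.
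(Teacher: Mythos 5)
Your construction satisfies {\sf (DGW-1)}, and your argument for {\sf (DGW-2)} is essentially sound: once a $T'$-path leaves the $T$-subtree below a common $T$-ancestor $x$ of $i$ and $k$ it cannot return (all arcs of $T'$ decrease DFS finish time, and every vertex below $x$ in $T$ has finish time at least $dfs(x)$), and defining $b(v)$ over $T'$-descendants is exactly what makes $X_i\cap X_k\subseteq X_j$ go through. The fatal gap is step (iii). The set $X_{\succeq j}$ that must be guarded is the union of the \emph{bags} of the $T'$-descendants of $j$, and each bag $X_k=\{r\}\cup[b(k),k]$ reaches upward in $T$ to proper $T$-ancestors of $k$ that need not be $T'$-descendants of $j$. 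You only control edges whose tails lie among $T'$-descendants of $j$; the out-edges of these dragged-in ancestor vertices are unaccounted for, and they can escape both $X_{\succeq j}$ and $X_i\cap X_j$. Concretely, take $V(G)=\{r,c_1,d_1,d_1',c_2\}$ with arcs $(r,c_1)$, $(c_1,d_1)$, $(c_1,d_1')$, $(r,c_2)$, $(d_1,c_1)$, $(d_1',r)$, $(c_2,d_1)$, and run DFS in the order $r,c_1,d_1,d_1',c_2$. The only cross edge is $(c_2,d_1)$, so $T'$ adds the single auxiliary arc $(c_2,d_1)$. Then $X_{d_1}=\{r,c_1,d_1\}$ (because of the back edge $(d_1,c_1)$) and $X_{c_2}=\{r,c_2\}$, so for the arc $(c_2,d_1)$ of $T'$ we get $X_{\succeq d_1}\setminus X_{c_2}=\{c_1,d_1\}$ and $X_{c_2}\cap X_{d_1}=\{r\}$; the tree edge $(c_1,d_1')$ leaves the guarded set without entering the guard, so {\sf (DGW-3)} fails. (This does not contradict the theorem on this tiny graph; it shows that the triple you build is not a DAG-decomposition, so the proof does not establish the bound.)

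A secondary problem is step (i): a $T'$-path is not a walk in $G$, because an auxiliary arc $(x,w)$ with $x$ a proper $T$-ancestor of the cross edge's tail $u'$ is not an edge of $G$. To obtain a cycle of $G$ you must expand each auxiliary arc into the tree path from $x$ down to $u'$ followed by the cross edge $(u',w)$ and then extract a simple cycle from the resulting closed walk; this can be repaired (every $T'$-descendant of $v$ other than $v$ has smaller finish time than $v$ and hence is not a proper $T$-ancestor of $v$, so the extracted cycle still contains all of $[b(v),v]$), but it is not what you wrote, and disjointness of the two vertex sets does not by itself make a closed walk a simple cycle. For comparison, the paper augments $T$ with arcs from each $v$ to \emph{every} vertex with smaller DFS number that is not a $T$-ancestor of $v$, and tops each bag using only back edges out of $T_{\succeq v}$; your leaner DAG and broader definition of $b(v)$ are a genuine variation that buys you {\sf (DGW-2)}, but the guarding of the ancestor vertices pulled into $X_{\succeq j}$ is the step your argument would need to supply and currently cannot.
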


\begin{proof}
Let $T$ be the \dfstree\ constructed in \secref{sec:dfs-tree}. We construct a DAG $\tilde{T}$ by adding more edges to $T$. For a vertex $v \in V(T)$ let $S_v =\{u\ |\ dfs(u) < dfs(v)\ \mbox{and}\ u \nprec_{T} v\}$. Add new edges from $v$ to every vertex in $S_v$. We do this for every $v \in V(T)$. The graph $\tilde{T}$ obtained in this way is a DAG.

We now define the set of node bags $\mathcal{X}=(X_v)_{v\in V(\tilde{T})}$. Let $X_r = \{r\}$. For every vertex $v \neq r$, we define $X_v$ as follows:

\begin{itemize}
\item{if there are no back edges from $T_{\succeq v}$, we define $X_v = \{r\}$.}
\item{if there are back edges from $T_{\succeq v}$, let $B$ be the set of all vertices $b \preceq_T v$ such that there is a back edge from some vertex in $T_{\succeq v}$ to $b$. Let $b_0$ be the minimal element in $B$ with respect to $\preceq_T$. Let $X_v = \{r\} \cup \{x\ |\ b_0 \preceq x \preceq v\}$. Note that $|\{x\ |\ b_0 \preceq x \preceq v\}| \leq l$ and hence $|X_v| \leq l+1$.}
\end{itemize}

The size of each node bag is at most $l+1$. We claim that $\mathcal{D}=(\tilde{T},\mathcal{X})$ is a DAG decomposition of $G$. Note that $V(G) = V(\tilde{T})$ and $v \in X_v$ for every vertex $v\in V(\tilde{T})$. Hence, {\sf (DGW-1)} is satisfied.

Consider two vertices $i \neq j$ such that $i \in X_j$. There exist $b \preceq i$ and $a \succeq j$ such that $(a,b) \in E(G)$ is a back edge. Every vertex $k$ such that $i \preceq k \preceq j$ satisfies $b \preceq k \preceq j$, and hence by our construction $k \in X_j$. So, {\sf (DGW-2)} is satisfied.

All the out-going edges from $X_{\succeq j} \setminus X_i$ are either back edges (or) edges going through the root $r$. All the heads of the back edges from $X_{\succeq j} \setminus X_i$ are in $X_i \cap X_j$. Also, $r \in X_v$ for every $v\in V(\tilde{T})$. Hence, $X_i \cap X_j$ guards $X_{\succeq j} \setminus X_i$ and {\sf (DGW-3)} is satisfied.
\end{proof}

\section{\kw\ and Circumference}

\indent Kelly-decomposition and \kw\ were introduced by Hunter and Kreutzer \cite{kellywidth-definition}.
\begin{definition}[Kelly-decomposition and \kw\ \cite{kellywidth-definition}]\label{definition:Kelly-decomposition}
A {\it Kelly-decomposition} of a digraph $G$ is a triple $\mathcal{D} = (T, \mathcal{W}, \mathcal{X})$ where $T$ is a DAG, and $\mathcal{W}=(W_i)_{i\in V(T)}$ and $\mathcal{X}=(X_i)_{i\in V(T)}$ are families of subsets (node bags) of $V(G)$, such that:
\begin{itemize}
\item $\mathcal{W}$ is a partition of $V(G)$. \hfill{\rm{\sf (KW-1)}}
\item For all nodes $i \in V(T), X_i$ guards $W_{\succeq i}$. \hfill{\rm{\sf (KW-2)}}
\item For each node $i \in V(T)$, the children of $i$ can be enumerated as $j_1, ... , j_s$ so that for each $j_q$, $X_{j_q} \subseteq W_i \cup X_i \cup \bigcup_{p<q} W_{\succeq j_p}$. Also, the roots of $T$ can be enumerated as $r_1, r_2, ...$ such that for each root $r_q$, $X_{r_q} \subseteq \bigcup_{p<q} W_{\succeq r_p}$. \hfill{\rm{\sf (KW-3)}}
\end{itemize}
The width of a Kelly-decomposition $\mathcal{D}=(T,\mathcal{W},\mathcal{X})$ is defined as $\max\{|W_i \cup X_i| : i \in V(T)\}$. The {\it \kw} of $G$, denoted by ${\sf kw}(G)$, is the minimum width over all possible Kelly-decompositions of $G$.
\end{definition}

\begin{theorem}
For a digraph $G$, ${\sf kw}(G) \leq {\sf circ}(G) + 1$.
\end{theorem}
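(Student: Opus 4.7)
The plan is to construct a Kelly-decomposition from the DFS tree in direct analogy with the DAG-decomposition of \thref{thm:dgw-circumference}. I take as the underlying DAG the same $\tilde{T}$ built there (by adding, for each $v$, the arc $v\to u$ for every $u\in S_v$), set $\mathcal{W}=(W_v)_{v\in V(\tilde{T})}$ with $W_v=\{v\}$---so $\mathcal{W}$ is the singleton partition of $V(G)$ and {\sf (KW-1)} is immediate---and define the guard sets $X_v$ to be the DAG-decomposition bags with $v$ itself removed: $X_r=\emptyset$, $X_v=\{r\}$ when $T_{\succeq v}$ has no outgoing back edge, and otherwise $X_v=\{r\}\cup\{x\mid b_0\preceq_T x\prec_T v\}$, with $b_0$ the most-ancestral back-edge target from $T_{\succeq v}$. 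A witnessing back edge together with the tree path from $b_0$ to $v$ forms a $G$-cycle of length at most ${\sf circ}(G)$, certifying $|X_v|\le{\sf circ}(G)$, so $|W_v\cup X_v|\le{\sf circ}(G)+1$, which is the desired width.

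Condition {\sf (KW-2)}, that $X_v$ guards $W_{\succeq v}$ in $\tilde{T}$, is inherited almost verbatim from the analysis of {\sf (DGW-3)} in \thref{thm:dgw-circumference}: every $G$-arc leaving $W_{\succeq v}$ is either a back edge whose head lies in $\{b_0,\ldots,\mathrm{parent}_T(v)\}\cup\{r\}\subseteq X_v$, or a cross/forward $G$-arc whose head was absorbed into $W_{\succeq v}$ by the added arcs of $\tilde{T}$ and thus requires no guarding.

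The main work is {\sf (KW-3)}: for each $i$ I must order the $\tilde{T}$-children $j_1,\ldots,j_s$ of $i$ so that $X_{j_q}\subseteq W_i\cup X_i\cup\bigcup_{p<q}W_{\succeq j_p}$. The $\tilde{T}$-children of $i$ split into the $T$-children of $i$ and the members of $S_i$ reached via added arcs, and I plan to enumerate them in increasing DFS-number order (so $S_i$-children appear first). A $T$-child $j_q$ satisfies $T_{\succeq j_q}\subseteq T_{\succeq i}$, which forces $b_0(i)\preceq_T b_0(j_q)$ and hence $X_{j_q}\subseteq\{i\}\cup X_i$ with no prior sibling needed. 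For an $S_i$-child $j_q$, letting $a$ be the $T$-LCA of $i$ and $j_q$ and $c_j$ the $T$-child of $a$ whose subtree contains $j_q$, the arc $i\to c_j$ also lies in $\tilde{T}$ and $c_j$ precedes $j_q$ in the enumeration (since $dfs(c_j)\le dfs(j_q)$); the portion of $X_{j_q}$ lying in $T_{\succeq c_j}$ is therefore absorbed by $W_{\succeq c_j}$.

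The delicate residue is the portion of $X_{j_q}$ sitting on the common root-to-$a$ path of $i$ and $j_q$, which must lie inside $X_i$; this reduces to the assertion $b_0(i)\preceq_T b_0(j_q)$ along the shared path. I anticipate handling this by refining the definition of $b_0$ so that it records the minimum back-edge target taken over \emph{all} $\tilde{T}$-descendants of $v$ whose back-edge targets lie on $v$'s root-path, rather than only over $T_{\succeq v}$; under this strengthening the inclusion $W_{\succeq j_q}\subseteq W_{\succeq i}$ in $\tilde{T}$ makes the containment automatic. The step I expect to cost the most care is the accompanying re-verification of $|X_v|\le{\sf circ}(G)$: one must produce, for every newly-counted back-edge target $b$, a $G$-cycle of length at least $|\{b,\ldots,v\}|$ passing through both $b$ and $v$, which requires exhibiting a $G$-path (not just a $\tilde{T}$-path) from $v$ back to the witnessing $\tilde{T}$-descendant, and exploiting the circumference hypothesis to bound its length.
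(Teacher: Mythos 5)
Your construction is the paper's construction line for line (same $\tilde{T}$, same singleton partition, same bags up to the harmless rewriting of $\{x\mid b_0\preceq x\preceq v\}\setminus v$ as $\{x\mid b_0\preceq_T x\prec_T v\}$), and your treatment of {\sf (KW-1)} and of the width of those bags matches the paper's. The substance is in {\sf (KW-2)} and {\sf (KW-3)}, and there you have correctly located a real difficulty but not resolved it. The crux is that in $\tilde{T}$ the set of descendants of $v$ is not $T_{\succeq v}$: it also contains all of $S_v$ (every vertex whose DFS exploration finishes before $v$ is discovered), so $W_{\succeq v}$ is much larger than the $T$-subtree from which the hook $b_0(v)$ is computed. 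A back edge leaving $W_{\succeq v}$ may therefore start at such an extra vertex and land on a proper ancestor of $v$ strictly above $b_0(v)$, hence outside $X_v$; your {\sf (KW-2)} paragraph (``inherited almost verbatim'') does not cover this case, and the ``delicate residue'' you isolate in {\sf (KW-3)} is the same phenomenon. Concretely, take $V(G)=\{r,z,a,u,i\}$ with arcs $r\to z$, $z\to a$, $a\to u$, $a\to i$, $z\to r$, $u\to z$, $i\to u$, and run DFS visiting $u$ before $i$. Then ${\sf circ}(G)=4$; $T_{\succeq i}=\{i\}$ has no outgoing back edge, so $X_i=\{r\}$; yet $u\in S_i$, so $W_{\succeq i}=\{i,u\}$ and the back edge $u\to z$ leaves it with head $z\notin X_i$, killing {\sf (KW-2)}; likewise $X_u=\{r,z,a\}\not\subseteq W_i\cup X_i=\{i,r\}$, so {\sf (KW-3)} fails for the unique $\tilde{T}$-child of $i$ under any enumeration.

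Your proposed repair---recomputing $b_0(v)$ over all $\tilde{T}$-descendants of $v$ whose back edges land on $v$'s root-path---does plausibly restore both guarding conditions (it handles the example above). But the step you explicitly defer, re-establishing $|X_v|\le{\sf circ}(G)$ for the enlarged bags, is now where the entire proof lives, and it is not routine: the witnessing back edge may start at a vertex $w\notin T_{\succeq v}$ that is reachable from $v$ only through the added arcs of $\tilde{T}$, which are not arcs of $G$, so there is no evident simple $G$-cycle containing the tree path from $b_0(v)$ to $v$, and none is produced. As written, the proposal is therefore incomplete at its critical point. For calibration: the paper's own proof silently identifies $W_{\succeq i}$ with $T_{\succeq i}$ and disposes of {\sf (KW-3)} in one sentence by ordering children by hook, so it is vulnerable to the very example above; you have not missed an argument that the paper supplies, but you also have not closed the gap you correctly diagnosed.
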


\begin{proof}
Let $\tilde{T}$ be the DAG constructed in the proof of \thref{thm:dgw-circumference}. Let $\mathcal{W}=(W_i)_{i\in V(T)}$ be a partition of $V(G)$ defined as $W_i = \{i\}$ for each $i \in V(T)$. We now define the set of node bags $\mathcal{X}=(X_v)_{v\in V(\tilde{T})}$. Let $X_r = \emptyset$. For every vertex $v \neq r$, we define $X_v$ as follows:

\begin{itemize}
\item{if there are no back edges from $T_{\succeq v}$, we define $X_v = \{r\}$.}
\item{if there are back edges from $T_{\succeq v}$, let $B$ be the set of all vertices $b \preceq_T v$ such that there is a back edge from some vertex in $T_{\succeq v}$ to $b$. Let $b_0$ be the minimal element in $B$ with respect to $\preceq_T$. Let $X_v = \{r\} \cup \{x\ |\ b_0 \preceq x \preceq v\} \setminus v$. Note that $|\{x\ |\ b_0 \preceq x \preceq v\} \setminus v| \leq l-1$ and hence $|X_v| \leq l$. We call $b_0$ the ``hook" of $v$ and denote it by $hook(v)$.}
\end{itemize}

The size of each node bag is at most $l$, so the size of each $|W_i \cup X_i|$ is at most $l+1$. We claim that $\mathcal{D}=(\tilde{T},\mathcal{W},\mathcal{X})$ is a Kelly decomposition of $G$. By construction, $\mathcal{W}=(W_i)_{i\in V(T)}$ is a partition of $V(G)$ so $\mathcal{D}$ satisfies {\sf (KW-1)}.

All the out-going edges from $W_{\succeq i}$ are either back edges (or) edges going through the root $r$. All the heads of the back edges from $W_{\succeq i}$ are in $X_i$. Also, $r \in X_v$ for every $v\in V(\tilde{T})$. Hence, $X_i$ guards $W_{\succeq i}$ and {\sf (KW-2)} is satisfied.

Recall the definition of ``hook". For a vertex $v \in V(T)$, if there are no back edges from $T_{\succeq v}$, we define $hook(v) = v$. For a node $i \in V(T)$, we enumerate the children of $i$ as $j_1, \dots , j_s$ such that $hook(j_1) \succeq hook(j_2) \succeq \dots \succeq hook(j_s)$. With this ordering, {\sf (KW-3)} is satisfied.
\end{proof}

\bibliographystyle{alpha}
\bibliography{../bib-twbook,../bib-kintali}

\end{document}